\newtheorem{theorem}{Theorem}[section]
\newtheorem{lemma}[theorem]{Lemma}
\newtheorem{proposition}[theorem]{Proposition}
\newtheorem{corollary}[theorem]{Corollary}
\theoremstyle{definition}
\newtheorem{definition}[theorem]{Definition}
\numberwithin{equation}{section}
\begin{document}

\baselineskip=15.5pt

\title[Equivariant bundles and connections]{Equivariant bundles and connections}

\author[I. Biswas]{Indranil Biswas}

\address{School of Mathematics, Tata Institute of Fundamental
Research, Homi Bhabha Road, Mumbai 400005, India}

\email{indranil@math.tifr.res.in}

\author[A. Paul]{Arjun Paul}

\address{School of Mathematics, Tata Institute of Fundamental 
Research, Homi Bhabha Road, Mumbai 400005, India} 

\email{apmath90@math.tifr.res.in}

\subjclass[2010]{32M10, 32L05, 14M17}

\keywords{Group action, principal bundle, Atiyah bundle, $G$--connection.}

\begin{abstract}
Let $X$ be a connected complex manifold equipped with a holomorphic action of a complex
Lie group $G$. We investigate conditions under which a principal bundle on $X$
admits a $G$--equivariance structure.
\end{abstract}

\maketitle

\section{Introduction}

Let $G$ a complex Lie group and $X$ a connected complex manifold equipped with a
holomorphic action of $G$
$$
\rho\,:\, G\times X\, \longrightarrow\, X\, .
$$
Let $E_H$ be a holomorphic principal $H$--bundle on $X$, where $H$ is a connected complex Lie 
group. Given these, we construct a short exact sequence of holomorphic vector bundles
$$
0\,\longrightarrow\, \text{ad}(E_H)\,\longrightarrow\,\text{At}_\rho
(E_H)\,\stackrel{q}{\longrightarrow}\, X\times{\mathfrak g}\, \longrightarrow\, 0\, ,
$$
where $\mathfrak g$ is the Lie algebra of $G$ and $\text{ad}(E_H)$ is the adjoint
vector bundle for $E_H$. The above vector bundle $\text{At}_\rho (E_H)$
is a subbundle of the vector bundle $\text{At}(E_H)\oplus (X\times{\mathfrak g})$,
where $\text{At}(E_H)$ is the Atiyah bundle for $E_H$ (see \eqref{e8}).
A holomorphic $G$--connection on $E_H$ is defined to be a holomorphic splitting of the above
exact sequence. In other words, a holomorphic $G$--connection on $E_H$ is a holomorphic homomorphism
of vector bundles from the trivial vector bundle with fiber $\mathfrak g$
$$
h\, :\, X\times{\mathfrak g}\,\longrightarrow\,\text{At}_\rho(E_H)
$$
such that $q\circ h\,=\, \text{Id}_{X\times{\mathfrak g}}$.

Consider the pullback $\rho^*E_H$ as a holomorphic family of principal $H$--bundles on $X$
parametrized by $G$. Let
$$
\mu\, :\, {\mathfrak g} \,=\, T_eG\, \longrightarrow\, H^1(X,\, {\rm ad}(E_H))
$$
be the infinitesimal deformation map for this family, where $e\,\in\, G$ is the identity element.
We prove that $E_H$ admits a holomorphic $G$--connection if and only if $\mu\,=\,0$
(see Lemma \ref{lem1}).

Let $\mathcal G$ be the complex Lie group consisting of all pairs of the form $(y\, , z)$, where
\begin{itemize}
\item $z\, \in\, G$ such that the holomorphic principal $H$--bundle $(\rho^* E_H)\vert_{\{z\}\times X}
\,\longrightarrow\, X$ is holomorphically isomorphic to $E_H$, and

\item $y\, :\, E_H \,\longrightarrow\, (\rho^* E_H)\vert_{\{z\}\times X}$ is a holomorphic
isomorphism of principal $H$--bundles.
\end{itemize}
This group $\mathcal G$ has natural actions on both $X$ and $E_H$. We show that $E_H$ has
a tautological holomorphic $\mathcal G$--connection whose curvature vanishes identically
(see Proposition \ref{prop2}). From this proposition it follows that every $G$--equivariant
principal $H$--bundle on $X$ has a holomorphic $G$--connection whose curvature vanishes
identically (Lemma \ref{lem4}).

Finally, assume that $G$ is semisimple and simply connected. Then $E_H$ admits a
$G$--equivariant structure if and only if $E_H$ admits a holomorphic $G$--connection
(Theorem \ref{thm1}). Theorem \ref{thm1} implies that $E_H$ admits a
$G$--equivariant structure if and only if the holomorphic principal $H$--bundle
$(\rho^* E_H)\vert_{\{z\}\times X}$ over $X$ is holomorphically isomorphic to $E_H$ for
all $z\, \in\, G$ (Corollary \ref{cor1}).

Equivariant bundles with invariant connections
are investigated in \cite{BU} using a Jordan algebraic approach.

\section{Atiyah exact sequence and group action}

\subsection{Atiyah exact sequence}

Let $H$ be a connected complex Lie group; its Lie algebra will be denoted by 
$\mathfrak h$. Let $X$ be a connected complex manifold; its holomorphic
tangent bundle will be denoted by $TX$. Take a holomorphic 
principal $H$--bundle on $X$
\begin{equation}\label{e-1}
p\, :\, E_H\, \longrightarrow\, X\, .
\end{equation}
Let
\begin{equation}\label{e3}
\psi\, :\, E_H\times H\, \longrightarrow\, E_H
\end{equation}
be the action of $H$ on $E_H$. We note that $p\circ \psi\,=\, p\circ p_{E_H}$, where
$p_{E_H}\,:\, E_H\times H\, \longrightarrow\,E_H$ is the natural projection, and the
action of $H$ on each fiber of $p$ is transitive and free. Let
\begin{equation}\label{e-2}
dp\, :\, TE_H\, \longrightarrow\, p^*TX
\end{equation}
be the differential of $p$, where $TE_H$ is the holomorphic
tangent bundle of $E_H$. Its kernel
$$
T_{\rm rel}\, :=\, \text{kernel}(dp)\, \subset\, TE_H
$$
is known as the \textit{relative tangent bundle} for $p$. So we get a short exact
sequence of holomorphic vector bundles on $E_H$
\begin{equation}\label{e4}
0\,\longrightarrow\, T_{\rm rel}\,\longrightarrow\,TE_H
\,\stackrel{dp}{\longrightarrow}\, p^*TX\, \longrightarrow\, 0\, .
\end{equation}
The differential of $\psi$
in \eqref{e3} produces a homomorphism from the trivial vector bundle on $E_H$ with
fiber $\mathfrak h$
$$
E_H\times{\mathfrak h}\, \longrightarrow\, TE_H
$$
which identifies $T_{\rm rel}$ with $E_H\times{\mathfrak h}$.

The action $\psi$ in \eqref{e3} produces an action of
$H$ on the total space of $TE_H$. The quotient
$$
\text{At}(E_H)\,:=\, (TE_H)/H
$$
is a holomorphic vector bundle on $E_H/H\,=\, X$, which is known as the
Atiyah bundle \cite{At}. Let
\begin{equation}\label{f1}
\text{ad}(E_H)\, :=\, E_H\times^H \mathfrak h\, \longrightarrow\, X
\end{equation}
be the adjoint vector bundle for $E_H$ which is associated to it for the adjoint
action of $H$ on $\mathfrak h$. The action of $H$ on $TE_H$ preserves the
subbundle $T_{\rm rel}$. Using the above identification of $T_{\rm rel}$
with $E_H\times{\mathfrak h}$, we have
$$
T_{\rm rel}/H \,=\, \text{ad}(E_H)\, .
$$
So after taking quotient by $H$, the exact sequence in \eqref{e4} produces a
short exact sequence of holomorphic vector bundles on $X$
\begin{equation}\label{e5}
0\,\longrightarrow\, \text{ad}(E_H)\,\stackrel{\iota}{\longrightarrow}\,\text{At}(E_H)
\,\stackrel{d'p}{\longrightarrow}\, TX\, \longrightarrow\, 0\, ,
\end{equation}
which is known as the \textit{Atiyah exact sequence} \cite{At}; the above
homomorphism $d'p$ is given by $dp$ in \eqref{e-2}.

A $C^\infty$ \textit{connection} on $E_H$ compatible with its holomorphic structure is
a $C^\infty$ splitting of the exact sequence in \eqref{e5}. A \textit{holomorphic
connection} on $E_H$ is a holomorphic splitting of this exact sequence \cite{At}.

\subsection{Atiyah bundle for group action}

Let $G$ be a complex Lie group acting holomorphically on the left of
$X$ with
\begin{equation}\label{e6}
\rho\,:\, G\times X\, \longrightarrow\, X
\end{equation}
being the map giving the action. The Lie algebra of $G$ will be denoted by $\mathfrak g$. The
differential of $\rho$ in \eqref{e6} produces a ${\mathcal O}_X$--linear homomorphism
from the trivial vector bundle on $X$ with fiber $\mathfrak g$
\begin{equation}\label{e7}
d'\rho\, :\, X\times{\mathfrak g}\, \longrightarrow\, TX\, ,
\end{equation}
where ${\mathcal O}_X$ is the sheaf of holomorphic functions on $X$. We note that the
image of $d'\rho$ need not be a subbundle of $TX$.

Consider the holomorphic homomorphism of vector bundles
$$
\rho'\, :\, \text{At}(E_H)\oplus (X\times{\mathfrak g})\,\longrightarrow\, TX\, ,
\ \ (v\, ,w)\, \longmapsto\, d'p(v)-d'\rho(w)\, ,
$$
where $d'p$ and $d'\rho$ are constructed in \eqref{e5} and \eqref{e7} respectively. We
note that $\rho'$ is surjective because $d'p$ is surjective. Define the subsheaf
\begin{equation}\label{e8}
\text{At}_\rho(E_H)\, :=\,
(\rho')^{-1}(0)\, \subset\, \text{At}(E_H)\oplus (X\times{\mathfrak g})
\end{equation}
which is in fact a subbundle because $\rho'$ is surjective.

We have two homomorphisms
$$
\iota_0\, :\, \text{ad}(E_H)\,\longrightarrow\, \text{At}_\rho(E_H)\, , \ \ v\,
\longmapsto\, (\iota(v)\, ,0)\, ,
$$
where $\iota$ is constructed in \eqref{e5}, and
$$
q\,:\, \text{At}_\rho(E_H) \,\longrightarrow\, X\times{\mathfrak g}
\, , \ \ (v\, ,w)\, \longmapsto\, w\, ,
$$
where $v\,\in\, \text{At}(E_H)$ and $w\,\in\, X\times{\mathfrak g}$.
Consequently, there is a short exact sequence of holomorphic vector bundles on $X$
\begin{equation}\label{e9}
0\,\longrightarrow\, \text{ad}(E_H)\,\stackrel{\iota_0}{\longrightarrow}\,\text{At}_\rho
(E_H)\,\stackrel{q}{\longrightarrow}\, X\times{\mathfrak g}\, \longrightarrow\, 0\, .
\end{equation}

A holomorphic splitting of \eqref{e9} is a holomorphic homomorphism of vector bundles
$$
h\, :\, X\times{\mathfrak g}\,\longrightarrow\,\text{At}_\rho(E_H)
$$
such that $q\circ h\,=\, \text{Id}_{X\times{\mathfrak g}}$.

\begin{definition}\label{def1}
A \textit{holomorphic} $G$--\textit{connection} on $E_H$ is a holomorphic splitting 
of \eqref{e9}.
\end{definition}

A holomorphic section of $\text{At}(E_H)$ defined over an open subset
$U\, \subset\, X$ is a
$H$--invariant holomorphic vector field on $p^{-1}(U)$, where $p$
is the projection in \eqref{e-1}. The Lie bracket of two $H$--invariant holomorphic
vector fields on $p^{-1}(U)$ is again a $H$--invariant holomorphic
vector field on $p^{-1}(U)$. Therefore, the sheaf of holomorphic sections of
$\text{At}(E_H)$ has the structure of a Lie algebra.
This and the complex Lie algebra structure of $\mathfrak g$ together
produce a complex Lie algebra structure on the sheaf of holomorphic sections of
$\text{At}_\rho (E_H)$.

Since the adjoint action of $H$ on $\mathfrak h$ preserves its Lie algebra structure,
every fiber of the vector bundle $\text{ad}(E_H)$ in \eqref{f1} is a Lie algebra
isomorphic to $\mathfrak h$.
We note that the
above Lie algebra structure on the fibers of $\text{ad}(E_H)$ coincides with the
one given by the Lie bracket of sections of $T_{\rm rel}$ (see \eqref{e4} for $T_{\rm rel}$).

The homomorphism $\iota_0$ in \eqref{e9} is compatible with
the Lie bracket operations on the sections of $\text{ad}(E_H)$ and $\text{At}_\rho (E_H)$.
Similarly, the homomorphism $q$ is also compatible with
the Lie bracket operations on the sections of $\text{At}_\rho (E_H)$ and
$X\times{\mathfrak g}$.

Let $h\, :\, X\times{\mathfrak g}\,\longrightarrow\,\text{At}_\rho(E_H)$ be a
holomorphic $G$--connection on $E_H$. For any two holomorphic sections $s$ and $t$
of the trivial holomorphic vector bundle $X\times{\mathfrak g}$ defined over an open
subset $U\, \subset\, X$, consider
$$
{\mathcal K}(h)(s,t)\, :=\,
[h(s)\, ,h(t)] - h([s\, ,t])\, \in\, \Gamma(U,\, \text{At}_\rho(E_H))\, .
$$
Since the homomorphism $q$ in \eqref{e9} is compatible with the Lie algebra
structures, it follows that $q({\mathcal K}(h)(s,t))\,=\,0$.
Hence ${\mathcal K}(h)(s,t)$ lies in the image of $\text{ad}(E_H)$. We have
$$
{\mathcal K}(h)(fs,t)\, =\, f {\mathcal K}(h)(s,t)
$$
for any holomorphic function $f$ defined on $U$. Also, clearly
we have $${\mathcal K}(h)(s,t)\, =\,- {\mathcal K}(h)(t,s)\, .$$
Combining all these it follows that
\begin{equation}\label{b}
{\mathcal K}(h)\, \in\, H^0(X,\, \text{ad}(E_H)\otimes \bigwedge\nolimits^2
(X\times{\mathfrak g})^*)\, =\, H^0(X,\, \text{ad}(E_H))\otimes\bigwedge\nolimits^2
{\mathfrak g}^*\, .
\end{equation}
This section ${\mathcal K}(h)$ will be called the \textit{curvature} of the
holomorphic $G$--connection $h$ on $E_H$.

\subsection{Criterion for connection}

Henceforth, we will always assume that the complex manifold $X$ is compact.

The space of all infinitesimal deformations of the holomorphic principal $H$--bundle
$E_H$ are parametrized by $H^1(X,\, {\rm ad}(E_H))$. Therefore, given any holomorphic
principal $H$--bundle $\widetilde{E}_H$ on $T\times X$ with $T$ being a complex manifold,
and a holomorphic isomorphism of $E_H$ with $\widetilde{E}_H\vert_{\{t\}\times X}$, where
$t\, \in\, T$ is a fixed point, we have the infinitesimal deformation homomorphism
$$
T_tT\, \longrightarrow\, H^1(X,\, {\rm ad}(E_H))\, ,
$$
where $T_tT$ is the fiber at $t$ of the holomorphic tangent bundle $TT$ of $T$.

Let $\rho^*E_H\, \longrightarrow\, G\times X$ be the pulled back holomorphic 
principal $H$--bundle, where $\rho$ is the map in \eqref{e6}. Considering it as a 
holomorphic family of principal $H$--bundles on $X$ parametrized by $G$, we have the
infinitesimal deformation homomorphism
\begin{equation}\label{mu}
\mu\, :\, {\mathfrak g} \,=\, T_eG\, \longrightarrow\, H^1(X,\, {\rm ad}(E_H))\, .
\end{equation}

\begin{lemma}\label{lem1}
The principal $H$--bundle $E_H$ admits a holomorphic $G$--connection if and only if $\mu\,=\, 0$.
\end{lemma}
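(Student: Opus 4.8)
The plan is to recognize \eqref{e9} as the pullback of the Atiyah sequence \eqref{e5} along $d'\rho$, so that the existence of a holomorphic $G$--connection becomes the vanishing of a single extension class in $\mathrm{Ext}^1(X\times\mathfrak{g},\,\text{ad}(E_H))$, and then to identify that class with the infinitesimal deformation homomorphism $\mu$.

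First I would note that, directly from the definition \eqref{e8}, the bundle $\text{At}_\rho(E_H)$ is the fiber product of $d'p\colon\text{At}(E_H)\to TX$ and $d'\rho\colon X\times\mathfrak{g}\to TX$. Hence the projection $\text{At}_\rho(E_H)\to\text{At}(E_H)$ together with $q$ and $d'\rho$ gives a morphism of short exact sequences from \eqref{e9} to \eqref{e5} which is the identity on $\text{ad}(E_H)$; that is, \eqref{e9} is exactly $(d'\rho)^*$ of the Atiyah exact sequence. By Definition \ref{def1}, a holomorphic $G$--connection is a holomorphic splitting of \eqref{e9}, and such a splitting exists if and only if the extension class of \eqref{e9} vanishes. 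So $E_H$ admits a $G$--connection iff this class is zero.

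Next I would compute the class. Since $X\times\mathfrak{g}$ is a trivial bundle,
$$
\mathrm{Ext}^1(X\times\mathfrak{g},\,\text{ad}(E_H))\,=\,H^1(X,\,\text{ad}(E_H))\otimes\mathfrak{g}^*\,=\,\mathrm{Hom}(\mathfrak{g},\,H^1(X,\,\text{ad}(E_H)))\, .
$$
As \eqref{e9} is the $d'\rho$--pullback of \eqref{e5}, its extension class is $(d'\rho)^*a(E_H)$, where $a(E_H)\in\mathrm{Ext}^1(TX,\,\text{ad}(E_H))=H^1(X,\,\text{ad}(E_H)\otimes\Omega^1_X)$ is the Atiyah class, i.e.\ the extension class of \eqref{e5}. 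Under the above identification, this is the homomorphism sending $w\in\mathfrak{g}$ to the contraction $\iota_{d'\rho(w)}a(E_H)\in H^1(X,\,\text{ad}(E_H))$ of the Atiyah class with the fundamental vector field $d'\rho(w)$.

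The crux is to show that this homomorphism is precisely $\mu$, and I expect this identification to be the main obstacle. I would verify it by a \v{C}ech computation. Choose an open cover $\{U_i\}$ of $X$ trivializing $E_H$, with transition functions $g_{ij}$, so that $a(E_H)$ is represented by the $1$--cocycle $\{g_{ij}^{-1}dg_{ij}\}$ valued in $\text{ad}(E_H)\otimes\Omega^1_X$. The family $\rho^*E_H$ has transition functions $g_{ij}\circ\rho$, and since $\rho(e,\,\cdot\,)=\mathrm{Id}_X$, differentiating at $z=e$ in the direction $w\in\mathfrak{g}=T_eG$ produces the $\text{ad}(E_H)$--valued $1$--cocycle $\{g_{ij}^{-1}\,(dg_{ij})(d'\rho(w))\}=\{\iota_{d'\rho(w)}(g_{ij}^{-1}dg_{ij})\}$, whose class is by definition $\mu(w)$. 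This cocycle equals $\iota_{d'\rho(w)}a(E_H)$, so $\mu=(d'\rho)^*a(E_H)$ as homomorphisms $\mathfrak{g}\to H^1(X,\,\text{ad}(E_H))$ (possibly up to an overall sign, which is immaterial). Combining this with the first two steps, $E_H$ admits a holomorphic $G$--connection iff $(d'\rho)^*a(E_H)=\mu$ vanishes, which proves the lemma.
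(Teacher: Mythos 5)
Your proposal is correct, but it takes a genuinely different route from the paper. You treat \eqref{e9} as the pullback extension $(d'\rho)^*$ of the Atiyah sequence \eqref{e5} (which it literally is, being the fiber product of $d'p$ and $d'\rho$), reduce the existence of a splitting to the vanishing of a class in $\mathrm{Ext}^1(X\times\mathfrak{g},\,\mathrm{ad}(E_H))\,=\,\mathrm{Hom}(\mathfrak{g},\,H^1(X,\mathrm{ad}(E_H)))$, and identify that class with $\mu$ via the formula $w\,\longmapsto\,\iota_{d'\rho(w)}a(E_H)$, checked on transition functions. The paper instead runs the long exact cohomology sequence of \eqref{e9} itself: it identifies $\mu$ with the connecting homomorphism $\mu_2\,:\,\mathfrak{g}\,=\,H^0(X,\,X\times\mathfrak{g})\,\longrightarrow\, H^1(X,\,\mathrm{ad}(E_H))$ using local $H$--invariant lifts of the vector fields $(\widetilde v,0)$ on $G\times X$ (matching how $\mu$ was defined), and then builds the splitting by hand: if $\mu_2\,=\,0$, the map $\mu_1$ on global sections hits the constants, so one chooses a subspace $S\,\subset\, H^0(X,\,\mathrm{At}_\rho(E_H))$ mapping isomorphically to $\mathfrak{g}$ and sets $h(x,v)\,=\,(\mu_1\vert_S)^{-1}(v)(x)$ --- this evaluation-of-global-sections step is where the paper uses $H^0(X,\,X\times\mathfrak{g})\,=\,\mathfrak{g}$, i.e.\ compactness and connectedness of $X$. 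What your route buys is the conceptually sharper statement $\mu\,=\,(d'\rho)^*a(E_H)$, tying the lemma directly to Atiyah's obstruction class and making the equivalence a formality of extension-class yoga; what the paper's route buys is that it works with the same objects (sections, vector-field lifts) used to define $\mu$ and to prove the later results, so no translation between descriptions of $\mu$ is needed. The one point you should make explicit is exactly that translation: the paper defines $\mu$ by differences of local $H$--invariant lifts of $(\widetilde v,0)$, whereas you invoke the transition-function (Kodaira--Spencer) description $\{h_{ij}^{-1}\partial_w h_{ij}\}$ with $h_{ij}\,=\,g_{ij}\circ\rho$; these agree --- take the lifts given by the trivial connections in the local trivializations, whose differences on overlaps produce precisely your cocycle --- but as written your ``whose class is by definition $\mu(w)$'' silently substitutes one standard definition for the other, and a sentence verifying their agreement (as well as the standard fact that $\{g_{ij}^{-1}dg_{ij}\}$ represents the extension class of \eqref{e5}, up to the sign you already flag) would close the loop.
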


\begin{proof}
Let
\begin{equation}\label{mu2}
H^0(X,\, \text{At}_\rho(E_H))\, \stackrel{\mu_1}{\longrightarrow}\, 
H^0(X,\, X\times{\mathfrak g})\,=\, {\mathfrak g}\, \stackrel{\mu_2}{\longrightarrow}\,
H^1(X,\, {\rm ad}(E_H))
\end{equation}
be the long exact sequence of cohomologies associated to the short exact sequence
in \eqref{e9}.

We will show that the above homomorphism $\mu_2$ coincides with $\mu$ in \eqref{mu}.
This requires recalling the construction of $\mu$. Take any $v\, \in\, {\mathfrak g}$.
Let $\widetilde v$ be a holomorphic vector field defined around the identity
element $e\, \in\, G$ such that ${\widetilde v}(e)\,=\, v$. Take open subsets
$\{U_i\}_{i\in I}$ of $G\times X$ such that
\begin{enumerate}
\item $\{e\}\times X\, \subset\, \bigcup_{i\in I} U_i$, and

\item for each $i\,\in\, I$, the vector field $({\widetilde v}\, ,0)$ on $U_i$ lifts
to a $H$--invariant vector field ${\widetilde v}_i$ on $(\rho^*E_H)\vert_{U_i}$;
we choose such a vector field for each $i\,\in\, I$. (Here $0$ denotes the zero vector
field on $X$.)
\end{enumerate}
Now for each ordered pair $(i\, ,j)\, \in\, I\times I$, consider the vector
field $${\widetilde v}_i-{\widetilde v}_j$$ on $p^{-1}(U_i\bigcap U_j\bigcap (\{e\}\times X))
\,\subset\, E_H$, where $p$ is the projection in \eqref{e-1}.
They form a $1$--cocycle with values in ${\rm ad}(E_H)$. The corresponding cohomology
class in $H^1(X,\, {\rm ad}(E_H))$ is $\mu (v)$. From this it is straight--forward
to check that $\mu$ coincides with $\mu_2$ in \eqref{mu2}.

First assume that $\mu_2\,=\, 0$. So $\mu_1$ in \eqref{mu2} is surjective. Fix a complex
linear subspace $S\, \subset\, H^0(X,\, \text{At}_\rho(E_H))$ such that the restriction
$$
\mu_0\, :=\, \mu_1\vert_S\, : \, S\, \longrightarrow\, {\mathfrak g}
$$
is an isomorphism. Now define
$$
h\, :\, X\times{\mathfrak g}\, \longrightarrow\,\text{At}_\rho(E_H)\, ,\ \
(x\, ,v)\, \longmapsto\, (\mu_0)^{-1}(v)(x)\, .
$$
Clearly, we have $q\circ h\,=\, \text{Id}_{X\times{\mathfrak g}}$, where $q$ is the
homomorphism in \eqref{e9}. Hence $h$ defines a holomorphic $G$--connection on $E_H$.

Conversely, let $h\, :\, X\times{\mathfrak g}\, \longrightarrow\,\text{At}_\rho(E_H)$
be a holomorphic $G$--connection on $E_H$. Let
\begin{equation}\label{hst}
h_*\, :\, H^0(X,\, X\times{\mathfrak g})\, \longrightarrow\,
H^0(X,\, \text{At}_\rho(E_H))
\end{equation}
be the homomorphism induced by $h$. Since $\mu_1\circ h_*\,=\,
\text{Id}_{H^0(X,X\times{\mathfrak g})}$, where $\mu_1$ is the homomorphism in
\eqref{mu2}, it follows that $\mu_1$ is surjective. Hence from the exactness of
\eqref{mu2} we conclude that $\mu_2\,=\, 0$.
\end{proof}

\subsection{Homomorphisms and induced connection}

Let
$$
f\, :\, G_1\,\longrightarrow\, G
$$
be a holomorphic homomorphism of complex Lie groups. Using $f$, the action of $G$
on $X$ produces an action of $G_1$ on $X$. More precisely,
$$
\rho_1\, :\, G_1\times X \,\longrightarrow\, X\, ,\ \ (g\, ,x)\, \longmapsto\,
\rho(f(g)\, ,x)\, ,
$$
where $\rho$ is the map in \eqref{e6}, is a holomorphic action of $G_1$ on $X$.
The Lie algebra of $G_1$ will be denoted by ${\mathfrak g}_1$. Let
$$
df\, :\, {\mathfrak g}_1\,\longrightarrow\, {\mathfrak g}
$$
be the homomorphism of Lie algebras associated to $f$. From the construction of
$\text{At}_\rho(E_H)$ in \eqref{e8} it follows that
\begin{equation}\label{at1}
\text{At}_{\rho_1}(E_H)\,=\, \{(y\, ,z)\, \in\, \text{At}_\rho(E_H)\oplus
(X\times{\mathfrak g}_1)\, \mid\, q(y)\,=\, (\text{Id}_X\times df)(z)\}\, ,
\end{equation}
where $q$ is the homomorphism in \eqref{e9}.

\begin{lemma}\label{lem2}
A holomorphic $G$--connection $h$ on $E_H$ induces a holomorphic $G_1$--connection $h_1$ on $E_H$.
The curvature ${\mathcal K}(h_1)$ coincides with the image of ${\mathcal K}(h)$
under the homomorphism
$$
H^0(X,\, {\rm ad}(E_H))\otimes\bigwedge\nolimits^2
{\mathfrak g}^*\,\longrightarrow\, H^0(X,\, {\rm ad}(E_H))\otimes\bigwedge\nolimits^2
{\mathfrak g}_1^*
$$
given by the identity map of $H^0(X,\, {\rm ad}(E_H))$ and the homomorphism
$$
\bigwedge\nolimits^2
{\mathfrak g}^*\,\longrightarrow\, \bigwedge\nolimits^2
{\mathfrak g}_1^*
$$
induced by the dual homomorphism $(df)^*\, :\,
{\mathfrak g}^*\,\longrightarrow\, {\mathfrak g}^*_1$.
\end{lemma}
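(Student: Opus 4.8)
The plan is to build $h_1$ directly from the fibre-product description \eqref{at1} and then read off the curvature identity from functoriality of the bracket. First I would define
$$
h_1\,:\, X\times{\mathfrak g}_1\,\longrightarrow\,\text{At}_{\rho_1}(E_H)\, ,\ \
z\,\longmapsto\, \bigl(h((\text{Id}_X\times df)(z))\, ,z\bigr)\, .
$$
To see this lands in $\text{At}_{\rho_1}(E_H)$, I use $q\circ h\,=\,\text{Id}_{X\times{\mathfrak g}}$, which gives $q(h((\text{Id}_X\times df)(z)))\,=\,(\text{Id}_X\times df)(z)$, exactly the defining relation of \eqref{at1}. Writing $q_1\,:\,\text{At}_{\rho_1}(E_H)\to X\times{\mathfrak g}_1$, $(y,z)\mapsto z$, for the projection of the sequence \eqref{e9} attached to $\rho_1$, one reads off $q_1\circ h_1\,=\,\text{Id}_{X\times{\mathfrak g}_1}$, so $h_1$ is a holomorphic $G_1$--connection on $E_H$.

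For the curvature, the key device is the first-projection map
$$
\pi_1\,:\,\text{At}_{\rho_1}(E_H)\,\longrightarrow\,\text{At}_\rho(E_H)\, ,\ \
(y,z)\,\longmapsto\, y\, ,
$$
available from \eqref{at1}. The main step is to check that $\pi_1$ is compatible with the Lie brackets on sections. Transporting the bracket on $\text{At}_{\rho_1}(E_H)$ — which pairs the bracket of $H$--invariant vector fields with the pointwise bracket in ${\mathfrak g}_1$ — through \eqref{at1}, one finds that it is precisely the product bracket on $\text{At}_\rho(E_H)\oplus(X\times{\mathfrak g}_1)$ restricted to the fibre product; the verification that this restriction stays inside the fibre product comes down exactly to $df([s,t])\,=\,[df(s),df(t)]$, i.e. to $df$ being a homomorphism of Lie algebras. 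Granting this, $\pi_1$ is a homomorphism of sheaves of Lie algebras. I would also record the two compatibilities $\pi_1\circ h_1\,=\,h\circ(\text{Id}_X\times df)$ (immediate from the definition of $h_1$) and the fact that $\pi_1$ carries the copy of $\text{ad}(E_H)$ sitting inside $\text{At}_{\rho_1}(E_H)$ via $\iota_0$ isomorphically onto the copy inside $\text{At}_\rho(E_H)$ (immediate from $df(0)=0$).

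Then the computation is formal, and since $\mathcal{K}(h_1)$ is $\mathcal{O}_X$--linear it suffices to evaluate on constant sections $s,t\,\in\,{\mathfrak g}_1$. Applying the Lie-algebra homomorphism $\pi_1$ to ${\mathcal K}(h_1)(s,t)\,=\,[h_1(s),h_1(t)]-h_1([s,t])$ and using the compatibilities above together with $df([s,t])=[df(s),df(t)]$ gives
$$
\pi_1({\mathcal K}(h_1)(s,t))\,=\,[h(df(s)),h(df(t))]-h([df(s),df(t)])\,=\,{\mathcal K}(h)(df(s),df(t))\, .
$$
Since ${\mathcal K}(h_1)(s,t)$ and ${\mathcal K}(h)(df(s),df(t))$ lie in the respective copies of $\text{ad}(E_H)$ and $\pi_1$ restricts to an isomorphism between these, the equality already holds in $\text{ad}(E_H)$; reading it as an identity of $H^0(X,\,\text{ad}(E_H))$--valued alternating forms on the relevant Lie algebras is exactly the assertion that ${\mathcal K}(h_1)$ is the image of ${\mathcal K}(h)$ under $\text{Id}\otimes\bigwedge\nolimits^2(df)^*$.

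I expect the only genuine obstacle to be the bracket-compatibility of $\pi_1$: one must check carefully that the Lie algebra structure on $\text{At}_{\rho_1}(E_H)$, once transported through \eqref{at1}, really is the product bracket restricted to the fibre product, so that preservation of brackets by $\pi_1$ reduces to $df$ respecting Lie brackets. Once that bookkeeping is in place, both the construction of $h_1$ and the curvature identity follow with no further calculation.
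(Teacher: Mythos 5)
Your construction of $h_1$ is exactly the paper's: the paper's proof consists of defining $h_1(z)\,=\,\bigl(h((\text{Id}_X\times df)(z))\, ,z\bigr)$ via \eqref{at1} and then simply asserting that the curvature is as described, so your proposal takes essentially the same approach. The only difference is that you supply the verification the paper leaves implicit --- the bracket-compatible projection $\pi_1$, its restriction to an isomorphism on the copies of $\text{ad}(E_H)$, and the evaluation on constant sections --- and this filling-in is correct (for constant sections the pointwise bracket on ${\mathfrak g}_1$-valued functions is the right one, and the compatibility of $\pi_1$ with brackets does reduce to $df$ being a Lie algebra homomorphism).
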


\begin{proof}
Consider the description of $\text{At}_{\rho_1}(E_H)$ in \eqref{at1}. Let
$$
h_1\, :\, X\times{\mathfrak g}_1\, \longrightarrow\, \text{At}_{\rho_1}(E_H)
$$
be the homomorphism defined by $$z\, \longmapsto\, (h((\text{Id}_X\times df)(z))\, ,
z)\, \in\, \text{At}_\rho(E_H)\oplus (X\times{\mathfrak g}_1)\, .$$
Then $h_1$ is a holomorphic $G_1$--connection on $E_H$. Its curvature ${\mathcal K}(h_1)$ 
is as described in the lemma.
\end{proof}

\section{Connection and lifting an action}

As before, $X$ is equipped with a holomorphic action of $G$, and $E_H$ is a holomorphic
principal $H$--bundle on $X$.

Let $\text{Aut}(E_H)$ denote the group of all holomorphic automorphisms of the
principal $H$--bundle $E_H$ over the identity map of $X$. In other words, an
element $g\, \in\, \text{Aut}(E_H)$ is a biholomorphism $E_H\,
\stackrel{g}{\longrightarrow}\, E_H$ such that
\begin{enumerate}
\item{} $p\circ g\,=\, p$, where $p$ is the
projection in \eqref{e-1}, and

\item{} $\psi(g(z)\, ,y)\,=\,g(\psi(z\, ,y))$ for
all $(z\, ,y)\,\in\, E_H\times H$, where $\psi$ is the action in \eqref{e3}.
\end{enumerate}
This $\text{Aut}(E_H)$ is a complex Lie group. Its Lie algebra is
$H^0(X,\, \text{ad}(E_H))$; the Lie algebra structure on the fibers of $\text{ad}(E_H)$
produces a complex Lie algebra structure on $H^0(X,\, \text{ad}(E_H))$.

Consider the action $\rho$ in \eqref{e6}. For any $z\, \in\, G$, let
\begin{equation}\label{rz}
\rho_z\, :\, X\, \longrightarrow\,X
\end{equation}
be the holomorphic automorphism defined by $x\, \longmapsto\, \rho(z\, ,x)$.

Let
\begin{equation}\label{g1}
G_1\, \subset\, G
\end{equation}
be the subset consisting all $z\, \in\, G$ such that
the pulled back principal $H$--bundle $\rho^*_z E_H$ is holomorphically isomorphic
to $E_H$ over the identity map of $X$. So $z\, \in\, G_1$ if and only only if
there is a holomorphic automorphism of the principal $H$--bundle $E_H$ over
the automorphism $\rho_z$ of $X$.
Let $\mathcal G$ denote the space of all pairs of the form $(y\, , z)$, where $z\,\in\,
G_1$, and $$y\, :\, E_H \,\longrightarrow\, E_H$$ is a holomorphic automorphism of
the principal $H$--bundle over the automorphism $\rho_z$ of $X$. We observe that
$\mathcal G$ is equipped with the group operation defined by
$$(y'\, , z')\cdot (y\, , z)\,=\, (y'\circ y\, , z'z)\, ,$$ while the inverse
is the map $(y\, , z)\, \longmapsto\, (y^{-1}\, , z^{-1})$. Therefore,
$\mathcal G$ fits in the following short exact sequence of groups
\begin{equation}\label{e10}
0\,\longrightarrow\, \text{Aut}(E_H)\,\stackrel{\alpha}{\longrightarrow}\,
{\mathcal G}\,\stackrel{\beta}{\longrightarrow}\, G_1 \,\longrightarrow\, 0\, ,
\end{equation}
where $\beta(y\, , z)\,=\, z$ and $\alpha (t)\,=\, (t\, ,e)$ with $e$ being the
identity element of $G_1$. There is a complex Lie group structure on $\mathcal G$
which is uniquely determined by the condition that \eqref{e10} is a sequence
of complex Lie groups.

The Lie algebra structure on the sheaf of sections of $\text{At}_\rho (E_H)$ produces 
the structure of a complex Lie algebra on $H^0(X,\, \text{At}_\rho (E_H))$.

\begin{proposition}\label{prop1}
The Lie algebra of $\mathcal G$ is canonically identified with the above Lie
algebra $H^0(X,\, {\rm At}_\rho (E_H))$.
\end{proposition}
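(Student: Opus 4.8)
The plan is to build the identification by differentiating the tautological action of $\mathcal G$ on $E_H$, and then to verify injectivity, surjectivity, and bracket-preservation in turn. First I would define a map
\[
\Phi\, :\, \text{Lie}(\mathcal G)\,\longrightarrow\, H^0(X,\, \text{At}_\rho(E_H))
\]
as follows. An element $\xi\,\in\,\text{Lie}(\mathcal G)$ is the velocity at $t\,=\,0$ of a holomorphic one-parameter subgroup $t\,\longmapsto\,(y_t\, , z_t)$ of $\mathcal G$, where each $y_t$ is an automorphism of $E_H$ lying over $\rho_{z_t}$ (see \eqref{rz}) and $t\,\longmapsto\, z_t$ is a one-parameter subgroup of $G_1\,\subset\, G$. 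Put $V\,:=\,\frac{d}{dt}\big|_{t=0}y_t$ and $w\,:=\,\frac{d}{dt}\big|_{t=0}z_t\,\in\,\mathfrak g$. Because each $y_t$ commutes with the $H$--action, $V$ is an $H$--invariant holomorphic vector field on $E_H$, hence a section of $\text{At}(E_H)$; and because $p\circ y_t\,=\,\rho_{z_t}\circ p$, the field $V$ projects under $d'p$ to the fundamental vector field $d'\rho(w)$ of $w$ on $X$. Thus $(V\, ,w)$ satisfies the defining condition of $\text{At}_\rho(E_H)$ in \eqref{e8}, and I set $\Phi(\xi)\,:=\,(V\, ,w)$.

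Next I would record that $\Phi$ is a morphism between the Lie algebra sequence attached to \eqref{e10} and the section sequence attached to \eqref{e9}, namely
\[
0\,\longrightarrow\,\text{Lie}(\text{Aut}(E_H))\,\longrightarrow\,\text{Lie}(\mathcal G)\,\stackrel{d\beta}{\longrightarrow}\,\text{Lie}(G_1)\,\longrightarrow\,0
\]
maps to the left-exact sequence $0\,\to\, H^0(X,\,\text{ad}(E_H))\,\xrightarrow{\iota_{0*}}\, H^0(X,\,\text{At}_\rho(E_H))\,\xrightarrow{\mu_1}\,\mathfrak g$ of \eqref{mu2}. Restricting $\Phi$ to $\text{Lie}(\text{Aut}(E_H))$ (the curves with $z_t\,\equiv\, e$) reproduces the canonical identification $\text{Lie}(\text{Aut}(E_H))\,=\,H^0(X,\,\text{ad}(E_H))$, since there $V$ is a vertical $H$--invariant field, i.e. a section of $T_{\rm rel}/H\,=\,\text{ad}(E_H)$. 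On the other side $\mu_1\circ\Phi\,=\,d\beta$, because the second component of $\Phi(\xi)$ is exactly $w\,=\,d\beta(\xi)$. Injectivity of $\Phi$ follows immediately: if $\Phi(\xi)\,=\,0$ then $d\beta(\xi)\,=\,w\,=\,0$, so $\xi\,\in\,\text{Lie}(\text{Aut}(E_H))$, and the vanishing of the vertical part $V$ forces $\xi\,=\,0$ via the canonical identification.

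For surjectivity I would integrate vector fields. Given $(V\, ,w)\,\in\, H^0(X,\,\text{At}_\rho(E_H))$, the field $V$ is $H$--invariant and projects to $d'\rho(w)$, whose flow is $\rho_{\exp(tw)}$ and is therefore complete on the compact manifold $X$. Covering any compact arc of a base integral curve by finitely many local trivializations of $E_H$ and using that left and right translations on $H$ are complete, one checks that $V$ itself is complete; this is exactly where compactness of $X$ is used. Its flow $\Psi_t$ commutes with the $H$--action and covers $\rho_{\exp(tw)}$, so $(\Psi_t\, ,\exp(tw))$ is a holomorphic one-parameter subgroup of $\mathcal G$ whose derivative at $t\,=\,0$ is an element $\xi$ with $\Phi(\xi)\,=\,(V\, ,w)$. (As a by-product this shows $\text{Lie}(G_1)\,=\,\ker\mu$, the image of $\mu_1$, reconciling the two exact sequences above.)

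Finally, to see that $\Phi$ is a homomorphism of Lie algebras, I would invoke the standard relation between the bracket on the Lie algebra of a transformation group and the bracket of its fundamental vector fields: the first component of $\Phi$ is the infinitesimal generator of the action of $\mathcal G$ on $E_H$, whose bracket is computed by the Lie bracket of $H$--invariant vector fields on $E_H$ — precisely the bracket defining the Lie algebra structure on $H^0(X,\,\text{At}(E_H))$ — while the $\mathfrak g$--component is governed by the Lie algebra homomorphism $d\beta$. The step requiring the most care is this bracket compatibility: the fundamental-vector-field assignment for the left action of $\mathcal G$ on $E_H$ is an anti-homomorphism, so one must fix the sign in the definition of $\Phi$ (say by differentiating $y_{-t}$) so that the brackets match on the nose. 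The completeness argument for $V$ in the surjectivity step is the other place where the hypotheses, and compactness of $X$ in particular, are genuinely needed.
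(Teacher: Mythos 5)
Your proposal is correct and follows essentially the same route as the paper: you differentiate the tautological $\mathcal G$--action on $E_H$ to obtain the map $\xi\longmapsto (V,w)$ (the paper's $h_1\,:\, v\longmapsto (h'(v),d\beta(v))$), and you prove surjectivity exactly as the paper does, by integrating an $H$--invariant section of ${\rm At}_\rho(E_H)$ to a one-parameter family $(\Psi_t,\exp(tw))$ in $\mathcal G$, with compactness of $X$ guaranteeing completeness of the flow. Your added details --- injectivity via the ladder of exact sequences from \eqref{e10} and \eqref{mu2}, the completeness argument for $V$, and the sign convention needed for bracket compatibility of fundamental vector fields of a left action --- merely flesh out steps the paper labels ``clearly'' or ``straight-forward,'' and do so correctly.
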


\begin{proof}
The Lie algebra of $\mathcal G$ will be denoted by $\widetilde{\mathfrak g}$. We will show
that there is a natural homomorphism from $\widetilde{\mathfrak g}$ to
$H^0(X,\, \text{At}_\rho (E_H))$.

First observe that the group $\mathcal G$ has a tautological action on the total space
$E_H$. Indeed, the action of $(y\, , z)\,\in\, \mathcal G$ sends any $x\, \in\, E_H$
to $y(x)\,\in\, E_H$. It is straight-forward to check that this defines a holomorphic
action of $\mathcal G$ on $E_H$. From the definition of $\mathcal G$ it follows
immediately that this action of $\mathcal G$ commutes with the
action of $H$ on $E_H$. Consequently, we get a homomorphism of complex Lie algebras
$$
h'\,:\, \widetilde{\mathfrak g}\, \longrightarrow\, H^0(X,\, \text{At}(E_H))\, .
$$
Now define
\begin{equation}\label{h1}
h_1\,:\, \widetilde{\mathfrak g}\, \longrightarrow\, H^0(X,\, \text{At}_\rho(E_H))\, ,\ \
v\, \longmapsto\, (h'(v)\, , d\beta(v))\, \in\,H^0(X,\, \text{At}(E_H))
\oplus{\mathfrak g}\, ,
\end{equation}
where $d\beta\, :\, \widetilde{\mathfrak g}\,\longrightarrow\, \text{Lie}(G_1)\,
\hookrightarrow\, \mathfrak g$
is the homomorphism of Lie algebras associated to $\beta$ in \eqref{e10}; it is
straight-forward to check that
$$
(h'(v)\, , d\beta(v))\, \in\,H^0(X,\, \text{At}_\rho (E_H))\, \subset\,
H^0(X,\, \text{At}(E_H))\oplus{\mathfrak g}\, ;
$$
see \eqref{e8}. Clearly, $h_1$ is an injective homomorphism of complex Lie algebras.

To prove that $h_1$ is surjective, take any $w\, \in\, H^0(X,\, \text{At}(E_H))$.
Let $t\, \longmapsto\, \varphi^t_w$, $t\, \in\, \mathbb C$, be the $1$--parameter family
of biholomorphisms of $E_H$ associated to $w$. We note that $\varphi^t_w$ exists because
$X$ is compact and $w$ is $H$--invariant. Since $w$ is fixed by the action of $H$ on $E_H$,
it follows immediately that the biholomorphism $\varphi^t_w$ commutes with the action of
$H$ on $E_H$ for every $t$.

Now assume that there is an element $v\, \in\,\mathfrak g$ such that
$$
(w\, ,v)\, \in\, H^0(X,\, \text{At}_\rho (E_H))\, \subset\,
H^0(X,\, \text{At}(E_H))\oplus{\mathfrak g}
$$
(see \eqref{e8}). Let $t\, \longmapsto\, \exp(t v)$, $t\, \in\, \mathbb C$, be the
$1$--parameter subgroup of $G$ associated to $v$. Now we observe that $(\varphi^t_w\, ,
\exp(t v))\, \in\, {\mathcal G}$. Indeed, this follows from the fact that the
vector field on $E_H$ given by $w$ projects to the vector field on $X$ given by $v$.
Consequently, the above element $(w\, ,v)\, \in\,
H^0(X,\, \text{At}_\rho (E_H))$ lies in the image of the homomorphism $h_1$ in
\eqref{h1}. Hence $h_1$ is surjective.
\end{proof}

In the proof of Proposition \ref{prop1} we saw that
${\mathcal G}$ has a tautological action on $E_H$. Let
$$
\eta\, :\, {\mathcal G}\times E_H\,\longrightarrow\, E_H
$$
be this action.

\begin{lemma}\label{lem3}
There is a natural holomorphic isomorphism of vector bundles
$$
{\rm At}_\eta (E_H)\,\longrightarrow\, {\rm ad}(E_H)
\oplus (X\times H^0(X,\, {\rm At}_\rho (E_H)))\, ,
$$
where $X\times H^0(X,\, {\rm At}_\rho (E_H))$ is the trivial vector bundle
on $X$ with fiber $H^0(X,\, {\rm At}_\rho (E_H))$, and
${\rm At}_\eta (E_H)$ is constructed as in \eqref{e8}.
\end{lemma}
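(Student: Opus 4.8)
The plan is to realize the claimed isomorphism as the splitting of the Atiyah-type exact sequence attached to the action $\eta$, with the splitting coming directly from the infinitesimal $\mathcal G$-action. First I would apply the construction of \eqref{e8}--\eqref{e9} to $\eta$: since $\eta$ covers the action of $\mathcal G$ on $X$ obtained by composing $\beta$ in \eqref{e10} with $\rho$, we obtain a short exact sequence
$$0\,\longrightarrow\, \text{ad}(E_H)\,\longrightarrow\, \text{At}_\eta(E_H)\,\stackrel{q}{\longrightarrow}\, X\times\widetilde{\mathfrak g}\,\longrightarrow\, 0\, ,$$
where $\widetilde{\mathfrak g}\,=\,\text{Lie}(\mathcal G)$ is the fibre of the trivial bundle on the right. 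By Proposition \ref{prop1} this fibre is canonically $H^0(X,\,\text{At}_\rho(E_H))$, which is exactly the second summand appearing in the statement, so it suffices to split the displayed sequence canonically.

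Next I would build the splitting from the map $h'$ used in the proof of Proposition \ref{prop1}. For $v\,\in\,\widetilde{\mathfrak g}$, the element $h'(v)\,\in\, H^0(X,\,\text{At}(E_H))$ is the $H$-invariant holomorphic vector field on $E_H$ generated by $v$ through $\eta$, and I would set $h_\eta(x\, ,v)\,=\,(h'(v)(x)\, ,v)$. The only point requiring verification is that this lands in the subbundle $\text{At}_\eta(E_H)\,\subset\,\text{At}(E_H)\oplus(X\times\widetilde{\mathfrak g})$; by \eqref{e8} this amounts to $d'p(h'(v)(x))$ being the value at $x$ of the fundamental vector field on $X$ determined by $v$, i.e. to the assertion that $h'(v)$ projects under $dp$ to the vector field generated by $v$ for the $\mathcal G$-action on $X$. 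This is precisely the compatibility already invoked at the end of the proof of Proposition \ref{prop1}, and $q\circ h_\eta\,=\,\text{Id}$ is immediate from the second coordinate, so $h_\eta$ is a holomorphic splitting (holomorphy and $\mathcal O_X$-linearity follow from those of the section $h'(v)$ and from the $\mathbb C$-linearity of $h'$).

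Finally I would convert the splitting into the stated isomorphism in the usual way, sending $(\xi\, ,v)\,\in\,\text{At}_\eta(E_H)$ over a point $x$ to $(\xi-h'(v)(x)\, ,v)\,\in\,\text{ad}(E_H)\oplus(X\times\widetilde{\mathfrak g})$. The first coordinate lies in $\text{ad}(E_H)_x\,=\,\ker(d'p)_x$ exactly because $\xi$ and $h'(v)(x)$ have the same image under $d'p$, by the compatibility just checked; the inverse $(a\, ,v)\,\mapsto\,(a+h'(v)(x)\, ,v)$ is manifestly holomorphic, so the map is an isomorphism of vector bundles. I expect the main obstacle to be purely bookkeeping: one must confirm that under the identification $\widetilde{\mathfrak g}\,=\,H^0(X,\,\text{At}_\rho(E_H))$ of Proposition \ref{prop1}, the assignment $v\,\mapsto\, h'(v)$ really is the first-coordinate projection into $H^0(X,\,\text{At}(E_H))$, as is forced by the formula for $h_1$ in \eqref{h1}. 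Once this compatibility is pinned down, the construction is canonical and all remaining checks are routine.
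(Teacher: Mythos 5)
Your proof is correct and is essentially the paper's argument: both rest on Proposition \ref{prop1}'s identification $\widetilde{\mathfrak g}\,=\,H^0(X,\,{\rm At}_\rho(E_H))$ together with evaluation of global sections at points of $X$, the only cosmetic difference being that you split the sequence \eqref{e9} for $\eta$ by the section $(x\,,v)\,\longmapsto\,(h'(v)(x)\,,v)$, whereas the paper constructs the equivalent retraction onto ${\rm ad}(E_H)$ via $(z\,,(x\,,s))\,\longmapsto\, z-s(x)$ using \eqref{at1}. Your map $(\xi\,,v)\,\longmapsto\,(\xi-h'(v)(x)\,,v)$ is exactly the paper's $q_2\oplus q_1$, so the two isomorphisms coincide.
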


\begin{proof}
Since $\text{At}_\eta (E_H)$ has a natural projection to $X\times\widetilde{\mathfrak g}$
(see \eqref{e9}), and Proposition \ref{prop1} identifies $\widetilde{\mathfrak g}$ with
$H^0(X,\, {\rm At}_\rho (E_H))$, we obtain a projection
$$
q_1\, :\, {\rm At}_\eta (E_H)\,\longrightarrow\, X\times H^0(X,\, {\rm At}_\rho (E_H))\, .
$$

The action of $\mathcal G$ on $X$ factors through the action of $G$ on $X$.
Recall the description of ${\rm At}_\eta (E_H)$ given in \eqref{at1}. Consider
the projection
$$
q'\, :\, \text{At}_\rho(E_H)\oplus(X\times\widetilde{\mathfrak g})\,=\,\text{At}_\rho
(E_H)\oplus (X\times H^0(X,\, {\rm At}_\rho (E_H)))\,\longrightarrow\, \text{At}_\rho(E_H)
$$
that sends any $(z\, ,(x\, ,s))$, where $x\, \in\, X$, $z\, \in\, \text{At}_\rho(E_H)_x$
and $s\, \in\, H^0(X,\, {\rm At}_\rho (E_H))$, to $z-s(x)\,\in\, \text{At}_\rho(E_H)_x$.
{}From \eqref{at1} it follows immediately, that
$$q\circ (q'\vert_{{\rm At}_\eta (E_H)})\,=\, 0\, ,$$ where $q$ is the projection
in \eqref{e9}. Therefore, the restriction $q'\vert_{{\rm At}_\eta (E_H)}$
produces a homomorphism
\begin{equation}\label{q2}
q_2\, :\, {\rm At}_\eta (E_H)\, \longrightarrow\, \text{kernel}(q)\,=\,
\text{ad}(E_H)\, .
\end{equation}
Now it is straight-forward to check that the composition
$$
q_2\oplus q_1\, :\, {\rm At}_\eta (E_H)\,\longrightarrow\, {\rm ad}(E_H)
\oplus (X\times H^0(X,\, {\rm At}_\rho (E_H)))\, ,
$$
is an isomorphism.
\end{proof}

\begin{proposition}\label{prop2}
The principal $H$--bundle $E_H$ has a tautological holomorphic $\mathcal G$--connection.
The curvature of this holomorphic $\mathcal G$--connection on $E_H$ vanishes identically.
\end{proposition}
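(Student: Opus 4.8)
The plan is to produce the splitting directly from the isomorphism of Lemma \ref{lem3}, and then to compute its curvature by invoking the fact, established inside the proof of Proposition \ref{prop1}, that the infinitesimal action map $h'$ is a homomorphism of Lie algebras.

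First I would construct the connection. Lemma \ref{lem3} gives an isomorphism $q_2\oplus q_1$ from ${\rm At}_\eta(E_H)$ to ${\rm ad}(E_H)\oplus (X\times H^0(X,\,{\rm At}_\rho(E_H)))$, in which $q_1$ is exactly the projection $q$ of the exact sequence \eqref{e9} for the action $\eta$, once $\widetilde{\mathfrak g}$ is identified with $H^0(X,\,{\rm At}_\rho(E_H))$ via Proposition \ref{prop1}. Composing the inverse of this isomorphism with the inclusion of the second direct summand,
$$
h_\eta\, :\, X\times \widetilde{\mathfrak g}\,\longrightarrow\,{\rm At}_\eta(E_H)\, ,\ \
(x\, ,\sigma)\,\longmapsto\, (q_2\oplus q_1)^{-1}(0\, , (x\, ,\sigma))\, ,
$$
yields a homomorphism satisfying $q_1\circ h_\eta\,=\,{\rm Id}$, hence a holomorphic $\mathcal G$--connection. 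This is the tautological connection.

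Next I would give $h_\eta$ a concrete description so that the curvature becomes computable. Unwinding the formula for $q_2$ from \eqref{q2}, the image of $h_\eta$ consists precisely of those elements of ${\rm At}_\eta(E_H)$ whose ${\rm At}_\rho(E_H)$--component equals the value of the corresponding global section. Viewing instead ${\rm At}_\eta(E_H)\,\subset\, {\rm At}(E_H)\oplus (X\times\widetilde{\mathfrak g})$ as in \eqref{e8}, this means the section $h_\eta(\,\cdot\,,\sigma)$ is $(h'(\sigma)\, ,\sigma)$, where $h'(\sigma)\,\in\, H^0(X,\,{\rm At}(E_H))$ is the $H$--invariant vector field on $E_H$ generated by $\sigma$ through the tautological action $\eta$, i.e.\ the map $h'$ appearing in the proof of Proposition \ref{prop1}. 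Then I would compute ${\mathcal K}(h_\eta)(\sigma,\tau)\,=\,[h_\eta(\sigma)\, ,h_\eta(\tau)]-h_\eta([\sigma\, ,\tau])$ on constant sections $\sigma,\tau\,\in\,\widetilde{\mathfrak g}$, which by the tensoriality recorded in \eqref{b} determines ${\mathcal K}(h_\eta)$ entirely. The Lie bracket on sections of ${\rm At}_\eta(E_H)$ induces the Lie bracket of vector fields on the ${\rm At}(E_H)$--component, so the ${\rm At}(E_H)$--component of $[h_\eta(\sigma)\, ,h_\eta(\tau)]$ is $[h'(\sigma)\, ,h'(\tau)]$, while that of $h_\eta([\sigma\, ,\tau])$ is $h'([\sigma\, ,\tau])$. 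Because $h'$ is a homomorphism of Lie algebras these coincide, so the ${\rm At}(E_H)$--component of the curvature vanishes; since ${\mathcal K}(h_\eta)(\sigma,\tau)$ already lies in ${\rm ad}(E_H)$, which injects into ${\rm At}(E_H)$, this forces ${\mathcal K}(h_\eta)(\sigma,\tau)\,=\,0$.

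I expect the main obstacle to be the bookkeeping in the middle step: one must reconcile the two descriptions of ${\rm At}_\eta(E_H)$ — the one from \eqref{at1} used in Lemma \ref{lem3} and the defining one from \eqref{e8} — and check carefully that the tautological splitting is genuinely the fundamental--vector--field lift $(h'(\sigma)\, ,\sigma)$, so that the homomorphism property of $h'$ can legitimately be applied. Once that identification is pinned down, the vanishing of the curvature follows at once.
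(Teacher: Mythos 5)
Your proposal is correct and takes essentially the same route as the paper: your $h_\eta$ is precisely the splitting given by $\text{kernel}(q_2)$ that the paper uses, constructed from the same isomorphism of Lemma \ref{lem3}. Your explicit curvature computation --- identifying the splitting with the fundamental--vector--field lift $\sigma\,\longmapsto\,(h'(\sigma)\, ,\sigma)$ and invoking the homomorphism property of $h'$ from Proposition \ref{prop1}, together with the injectivity of $\iota$ --- is just a spelled--out version of the paper's one--line observation that $q_2$ preserves the Lie bracket on sections, so that $\text{kernel}(q_2)$ is closed under the bracket.
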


\begin{proof}
Let $\iota'\, :\, {\rm ad}(E_H)\,\longrightarrow\,
{\rm At}_\eta (E_H)$ be the natural inclusion (see \eqref{e9}). For the homomorphism
$q_2$ in \eqref{q2}, we have
$$
q_2\circ \iota'\,=\, \text{Id}_{{\rm ad}(E_H)}\, .
$$
Therefore, $\text{kernel}(q_2)$ provides a holomorphic splitting of the analog of the short
exact sequence \eqref{e9} for ${\rm At}_\eta (E_H)$. In other words, $\text{kernel}(q_2)$
defines a holomorphic $\mathcal G$--connection on $E_H$.

Since the homomorphism $q_2$ preserves the Lie algebra structure on the sheaf of sections of
${\rm At}_\eta (E_H)$ and ${\rm ad}(E_H)$, it follows that the sheaf of sections of
$\text{kernel}(q_2)$ is closed under the Lie algebra structure on the sheaf of sections of
${\rm At}_\eta (E_H)$. Therefore, the curvature of the holomorphic $\mathcal G$--connection on $E_H$
defined by $\text{kernel}(q_2)$ vanishes identically.
\end{proof}

\section{Equivariant bundles and connection}

Now-onwards, we assume that the Lie group $G$ is connected.

An equivariance structure on the principal $H$--bundle $E_H$ is a holomorphic action 
of $G$ on the total space of $E_H$
$$
\rho_E\, :\, G\times E_H\, \longrightarrow\, E_H
$$
such that
\begin{enumerate}
\item{} $p\circ\rho_E\,=\, \rho\circ (\text{Id}_G\times p)$, where $p$ and $\rho$
are the maps in \eqref{e-1} and \eqref{e6} respectively, and

\item{} $\rho_E\circ (\text{Id}_G\times \psi)\,=\, \psi\circ (\rho_E\times \text{Id}_H)$
as maps from $G\times E_H\times H$ to $E_H$, where $\psi$ is the action in \eqref{e3}.
\end{enumerate}
An equivariant principal $H$--bundle is a principal $H$--bundle with an
equivariance structure.

\begin{lemma}\label{lem4}
Let $(E_H\, , \rho_E)$ be an equivariant principal $H$--bundle. Then $E_H$ has a
tautological holomorphic $G$--connection. The curvature of this holomorphic $G$--connection
vanishes identically.
\end{lemma}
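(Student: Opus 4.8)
The plan is to exploit the machinery built around the group $\mathcal{G}$ in Section 3, recognizing an equivariance structure as precisely a splitting of the exact sequence \eqref{e10}, and then deducing everything from Proposition \ref{prop2} by functoriality. First I would observe that for each $z\,\in\, G$ the map $\rho_{E,z}\,:\, E_H\,\longrightarrow\, E_H$, $x\,\longmapsto\,\rho_E(z,x)$, is, by conditions (1) and (2) in the definition of an equivariance structure, a holomorphic automorphism of the principal $H$--bundle $E_H$ lying over the automorphism $\rho_z$ of $X$. Hence $z\,\in\, G_1$ for every $z\,\in\, G$, so $G_1\,=\, G$, and $(\rho_{E,z}\, , z)\,\in\,\mathcal{G}$.

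Next I would assemble these into a map $\sigma\,:\, G\,\longrightarrow\,\mathcal{G}$, $z\,\longmapsto\, (\rho_{E,z}\, , z)$, and check that it is a holomorphic homomorphism of complex Lie groups. Since $\rho_E$ is a left action we have $\rho_{E,z'z}\,=\,\rho_{E,z'}\circ\rho_{E,z}$, so the group law $(y'\, , z')\cdot(y\, , z)\,=\, (y'\circ y\, , z'z)$ of $\mathcal{G}$ gives $\sigma(z')\sigma(z)\,=\,\sigma(z'z)$; holomorphicity follows from that of $\rho_E$ together with the complex Lie group structure on $\mathcal{G}$ pinned down by \eqref{e10}. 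By construction $\beta\circ\sigma\,=\,\text{Id}_G$, so $\sigma$ is a section of \eqref{e10}, and the action of $G$ on $X$ obtained by composing $\sigma$ with the tautological action of $\mathcal{G}$ on $X$ is exactly the original action $\rho$.

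With $\sigma$ in hand the conclusion is formal. Proposition \ref{prop2} furnishes the tautological holomorphic $\mathcal{G}$--connection on $E_H$, whose curvature vanishes identically. Viewing $\sigma\,:\, G\,\longrightarrow\,\mathcal{G}$ as the homomorphism $f$ of Lemma \ref{lem2} (with $\mathcal{G}$ in the role of the ambient group and $G$ in the role of the source group, so that the induced action on $X$ is the original $\rho$), this $\mathcal{G}$--connection induces a holomorphic $G$--connection on $E_H$; this is the asserted tautological $G$--connection. For the curvature, Lemma \ref{lem2} identifies $\mathcal{K}$ of the induced connection with the image of the $\mathcal{G}$--curvature under the linear map on $H^0(X,\,\text{ad}(E_H))\otimes\bigwedge\nolimits^2(-)^*$ induced by $(d\sigma)^*$. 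Since the $\mathcal{G}$--curvature is zero by Proposition \ref{prop2}, its image is zero, and therefore $\mathcal{K}$ of the induced $G$--connection vanishes identically.

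The one genuine point to verify, and the main (if modest) obstacle, is that $\sigma$ is a bona fide holomorphic homomorphism of complex Lie groups, i.e.\ that $z\,\longmapsto\,\rho_{E,z}$ is holomorphic into $\mathcal{G}$ for the Lie group structure determined by \eqref{e10}; this is exactly where the hypothesis that $\rho_E$ is a holomorphic action of $G$, rather than merely a pointwise family of automorphisms, is needed. Once this is in place, everything else is a formal consequence of Proposition \ref{prop2} and the curvature formula of Lemma \ref{lem2}, and no further computation is required.
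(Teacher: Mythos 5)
Your proposal is correct and follows essentially the same route as the paper: the paper also defines the section $\beta_E\,:\, G\,\longrightarrow\,\mathcal{G}$, $g\,\longmapsto\,\rho^g_E$ (your $\sigma$), notes $\beta\circ\beta_E\,=\,\mathrm{Id}_G$ so that $G_1\,=\,G$, and then obtains the flat $G$--connection by applying Lemma \ref{lem2} to the tautological $\mathcal{G}$--connection of Proposition \ref{prop2}. Your added attention to the holomorphicity of $\sigma$ and to the fact that the induced $G$--action on $X$ is the original $\rho$ is a fair point the paper leaves implicit, but it does not change the argument.
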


\begin{proof}
Note that for any $g\, \in\, G$, the map
$$
\rho^g_E\, :\, E_H\, \longrightarrow\, E_H\, , \ \ \ z\,\longmapsto\, \rho_E(g\, ,z)\, ,
$$
is an automorphism of the principal $H$--bundle
$E_H$ over the automorphism $\rho_g$ of $X$ in \eqref{rz}. Therefore, the group $G_1$
in \eqref{g1} coincides with $G$. In fact, the above map
$$
g\, \longrightarrow\, \rho^g_E
$$
is a homomorphism
$$
\beta_E\, :\, G\, \longrightarrow\, \mathcal G
$$
such that $\beta\circ\beta_E \,=\, \text{Id}_G$, where $\beta$ is the homomorphism
in \eqref{e10}.

Consider the tautological holomorphic $\mathcal G$--connection in Proposition \ref{prop2}.
In view of the above homomorphism $\beta_E$, using Lemma \ref{lem2} it produces
a holomorphic $G$--connection. From Lemma \ref{lem2} it also follows that the curvature of
this holomorphic $G$--connection vanishes identically.
\end{proof}

The following is a converse of Lemma \ref{lem4}.

\begin{lemma}\label{lem5}
Let $h\, :\, X\times{\mathfrak g}\, \longrightarrow\, {\rm At}_\rho (E_H)$ be
a holomorphic $G$--connection on $E_H$ such that the curvature vanishes identically. Assume
that $G$ is simply connected. Then there is an equivariance structure 
$$
\rho_E\, :\, G\times E_H\, \longrightarrow\, E_H
$$
such that the holomorphic $G$--connection associated to it by Lemma \ref{lem4} 
coincides with $h$.
\end{lemma}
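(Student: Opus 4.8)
The plan is to turn the hypothesis that $\mathcal{K}(h)$ vanishes into the statement that a certain Lie algebra map is a homomorphism, integrate that homomorphism to a group homomorphism $G\to\mathcal G$ (this is where simple connectivity of $G$ enters), and then let $G$ act on $E_H$ through $\mathcal G$ by the tautological action $\eta$. To begin, note that a holomorphic $G$--connection $h\,:\,X\times{\mathfrak g}\to\text{At}_\rho(E_H)$ is the same datum as a $\mathbb C$--linear map $\ell\,:\,{\mathfrak g}\to H^0(X,\,\text{At}_\rho(E_H))$, namely $v\mapsto h(\cdot\,,v)$, satisfying $\mu_1\circ\ell\,=\,\text{Id}_{\mathfrak g}$ with $\mu_1$ as in \eqref{mu2}. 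Evaluating the curvature on constant sections $v,w\in{\mathfrak g}$ gives ${\mathcal K}(h)(v,w)\,=\,[\ell(v),\ell(w)]-\ell([v,w])$, so the assumption ${\mathcal K}(h)=0$ says exactly that $\ell$ is a homomorphism of complex Lie algebras. By Proposition \ref{prop1}, $H^0(X,\,\text{At}_\rho(E_H))$ is canonically the Lie algebra $\widetilde{\mathfrak g}$ of $\mathcal G$, and under this identification $\mu_1$ becomes the differential $d\beta$ of the projection $\beta$ in \eqref{e10} (compare \eqref{h1}). Thus $\ell\,:\,{\mathfrak g}\to\widetilde{\mathfrak g}$ is a homomorphism of complex Lie algebras with $d\beta\circ\ell\,=\,\text{Id}_{\mathfrak g}$.

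Since $X$ is compact, $\widetilde{\mathfrak g}\,=\,H^0(X,\,\text{At}_\rho(E_H))$ is finite--dimensional, so $\mathcal G$ is a finite--dimensional complex Lie group. As $G$ is simply connected, the standard integration theorem for Lie algebra homomorphisms out of a simply connected group produces a unique holomorphic homomorphism $\beta_E\,:\,G\to\mathcal G$ with $d\beta_E\,=\,\ell$; it is holomorphic because $\ell$ is complex--linear. The composition $\beta\circ\beta_E\,:\,G\to G$ has differential $d\beta\circ\ell\,=\,\text{Id}_{\mathfrak g}$, hence equals $\text{Id}_G$ because $G$ is connected; in particular $G_1\,=\,G$. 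I then define
$$
\rho_E\,:\,G\times E_H\,\longrightarrow\, E_H\,,\qquad (g,z)\,\longmapsto\,\eta(\beta_E(g),z)\,.
$$
Because $\eta$ commutes with the action of $H$ and, by $\beta\circ\beta_E\,=\,\text{Id}_G$, covers the original action $\rho$ on $X$, the map $\rho_E$ satisfies the two conditions defining an equivariance structure.

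It remains to check that the holomorphic $G$--connection produced from $\rho_E$ by Lemma \ref{lem4} is exactly $h$. By construction Lemma \ref{lem4} pulls back, via $\beta_E$ and Lemma \ref{lem2}, the tautological $\mathcal G$--connection of Proposition \ref{prop2}. Unwinding the description of $\text{At}_\eta(E_H)$ in \eqref{at1} together with the splitting $\text{kernel}(q_2)$ from Proposition \ref{prop2} and Lemma \ref{lem3}, the tautological $\mathcal G$--connection is the section $s\,\longmapsto\,(s(x),(x,s))$; feeding this through the formula of Lemma \ref{lem2} with $df\,=\,d\beta_E\,=\,\ell$ yields precisely the $G$--connection attached to the linear map $\ell$, where one uses $d\beta\circ\ell\,=\,\text{Id}_{\mathfrak g}$ to match the ${\mathfrak g}$--components. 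Since $\ell$ corresponds to $h$, this connection is $h$. I expect the main obstacle to be exactly this last verification: carefully threading the nested bundles $\text{At}_\eta(E_H)$ and $\text{At}_\rho(E_H)$ through the identifications of Proposition \ref{prop1} and Lemma \ref{lem3} so that the connection coming out of Lemma \ref{lem4} is literally $h$, rather than merely a flat connection with the same behaviour. Conceptually, though, the decisive input is the integration step, which is the only place where the simple connectivity of $G$ is used.
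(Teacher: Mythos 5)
Your proposal follows essentially the same route as the paper's proof: vanishing curvature makes $h_*$ a Lie algebra homomorphism into $\widetilde{\mathfrak g}\,=\,H^0(X,\,{\rm At}_\rho(E_H))$ via Proposition \ref{prop1}, simple connectivity integrates it to a homomorphism $G\,\longrightarrow\,\mathcal G$, and the tautological action of $\mathcal G$ on $E_H$ then yields the equivariance structure. Your additional care at the end (checking $\beta\circ\beta_E\,=\,{\rm Id}_G$ and unwinding Lemmas \ref{lem2} and \ref{lem3} to see the induced connection is literally $h$) fills in what the paper compresses into the word ``clearly,'' but it is the same argument.
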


\begin{proof}
Recall that $H^0(X,\, \text{At}_\rho (E_H))\,=\, \widetilde{\mathfrak g}
\,=\, \text{Lie}({\mathcal G})$ by Proposition \ref{prop1}. Let
$$
h_*\ :\, {\mathfrak g}\,=\, H^0(X,\, X\times{\mathfrak g})
\, \longrightarrow\, H^0(X,\, \text{At}_\rho (E_H))\,=\, \widetilde{\mathfrak g}
$$
be the $\mathbb C$--linear map induced by $h$. Since the curvature of the
holomorphic $G$--connection $h$ vanishes identically, it follows that $h_*$ is a homomorphism
of Lie algebras. As $G$ is simply connected, there is a unique holomorphic homomorphism
of complex Lie groups
$$
\gamma\,:\, G\, \longrightarrow\, {\mathcal G}
$$
such that the differential $d\gamma(e)\, :\, {\mathfrak g}\, \longrightarrow\,
\widetilde{\mathfrak g}$ coincides with $h_*$. Now $\gamma$ produces an
equivariance structure on $E_H$; recall that $\mathcal G$ acts on $E_H$. The corresponding
holomorphic $G$--connection given by Lemma \ref{lem4} clearly coincides with $h$.
\end{proof}

\subsection{The group $G$ is semisimple}

We now assume that $G$ is a semisimple and simply connected affine algebraic group
defined over $\mathbb C$.

The following theorem show that holomorphic $G$--connections produce $G$--equivariance structures.

\begin{theorem}\label{thm1}
Let $E_H$ be a holomorphic principal $H$--bundle on $X$ admitting a holomorphic $G$--connection
$h$. Then $E_H$ admits an equivariance structure
$$
\rho_E\, :\, G\times E_H\, \longrightarrow\, E_H\, .
$$
\end{theorem}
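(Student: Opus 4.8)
The plan is to reinterpret a flat holomorphic $G$--connection as a Lie-algebraic splitting of a short exact sequence of finite-dimensional Lie algebras, to produce such a splitting from the semisimplicity of $\mathfrak g$ via the Levi decomposition, and then to invoke Lemma \ref{lem5}.

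First I would record the relevant extension of Lie algebras. Since $E_H$ admits a holomorphic $G$--connection, Lemma \ref{lem1} gives $\mu = 0$, so $\mu_1$ in \eqref{mu2} is surjective and we obtain
\begin{equation*}
0\,\longrightarrow\, H^0(X,\, \text{ad}(E_H))\,\longrightarrow\, H^0(X,\, \text{At}_\rho(E_H))\,\stackrel{\mu_1}{\longrightarrow}\, {\mathfrak g}\,\longrightarrow\, 0\, .
\end{equation*}
The manifold $X$ is compact, so all three terms are finite-dimensional, and because $\iota_0$ and $q$ in \eqref{e9} respect the Lie bracket this is an extension of finite-dimensional complex Lie algebras; by Proposition \ref{prop1} its middle term is $\widetilde{\mathfrak g} = \text{Lie}(\mathcal G)$. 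As in the proof of Lemma \ref{lem1}, a holomorphic $G$--connection is exactly a $\mathbb C$--linear splitting $h_*$ of $\mu_1$, and for constant sections $s,t\in{\mathfrak g}$ the curvature in \eqref{b} satisfies $\mathcal K(h)(s,t) = [h_*(s),h_*(t)] - h_*([s,t])$; thus $\mathcal K(h)$ measures precisely the failure of $h_*$ to be a homomorphism of Lie algebras, and a flat holomorphic $G$--connection is the same as a splitting of the above extension in the category of Lie algebras.

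Next I would construct such a Lie-algebraic splitting using that $\mathfrak g$ is semisimple. Choose a Levi decomposition $\widetilde{\mathfrak g} = \mathfrak r + \mathfrak s$, where $\mathfrak r$ is the radical and $\mathfrak s$ is a maximal semisimple subalgebra (so $\mathfrak r$ is an ideal, $\mathfrak s$ a subalgebra, and $\mathfrak r\cap\mathfrak s = 0$). Since $\mu_1(\mathfrak r)$ is a solvable ideal of the semisimple Lie algebra $\mathfrak g$, it vanishes, so $\mathfrak r$ is contained in $\ker\mu_1$ and $\mu_1$ restricts to a surjection of $\mathfrak s$ onto $\mathfrak g$. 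The kernel of this surjection is an ideal of the semisimple algebra $\mathfrak s$, hence admits a complementary ideal $\mathfrak s'$, and $\mu_1$ restricts to an isomorphism of Lie algebras from $\mathfrak s'$ onto $\mathfrak g$. Its inverse $\sigma\colon {\mathfrak g}\,\longrightarrow\,\widetilde{\mathfrak g}$ is a homomorphism of Lie algebras splitting $\mu_1$, and the associated $G$--connection $h'$ defined by $(x,v)\,\longmapsto\,\sigma(v)(x)$, as in Lemma \ref{lem1}, has $\mathcal K(h') = 0$.

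Finally, since $G$ is simply connected, Lemma \ref{lem5} applied to the flat connection $h'$ yields an equivariance structure $\rho_E\colon G\times E_H\,\longrightarrow\, E_H$. I expect the only substantive step to be the splitting of the Lie algebra extension: the proof hinges on translating the curvature into the non-homomorphism defect of $h_*$ and then on the Levi decomposition together with the semisimplicity of $\mathfrak g$ (equivalently, the vanishing of the relevant Lie algebra cohomology obstruction). Note that the given connection $h$ need not itself be flat; it enters only through Lemma \ref{lem1}, to guarantee $\mu = 0$, and is then replaced by the flat connection $h'$.
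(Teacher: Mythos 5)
Your proposal is correct and follows essentially the same route as the paper: use the existence of $h$ to make $\mu_1$ a surjection of Lie algebras, split it by a Lie subalgebra mapping isomorphically onto the semisimple $\mathfrak g$, observe that the resulting $G$--connection is flat, and conclude via Lemma \ref{lem5}. The only difference is cosmetic: where the paper cites \cite[p.~91, Corollaire 3]{Bo} for the existence of the splitting subalgebra $\mathcal S$, you prove that statement inline via the Levi--Malcev decomposition (radical killed by $\mu_1$, then a complementary ideal in the Levi factor), which is exactly the content of the cited result.
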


\begin{proof}
Consider the homomorphisms $\mu_1$ and $h_*$ constructed in \eqref{mu2}
and \eqref{hst} respectively. Since
$$
\mu_1\circ h_*\,=\,
\text{Id}_{H^0(X,X\times{\mathfrak g})}\,=\, \text{Id}_{\mathfrak g}\, ,
$$
we know that the Lie algebra homomorphism $\mu_1$ is surjective. As $\mathfrak g$
is semisimple, there is a Lie subalgebra
$${\mathcal S}\, \subset\, H^0(X,\, \text{At}_\rho(E_H))$$ such that the restriction
$$
\widehat{\mu}\, :=\, \mu_1\vert_{\mathcal S}\, :\,
{\mathcal S}\, \longrightarrow\, H^0(X,X\times{\mathfrak g})\,=\, {\mathfrak g}
$$
is an isomorphism \cite[p. 91, Corollaire 3]{Bo}. Fix a subspace $\mathcal S$ as above. Define
$\widehat{h}$ to be the composition
$$
H^0(X,X\times{\mathfrak g})\,=\,{\mathfrak g}\, \stackrel{\widehat{\mu}^{-1}}{\longrightarrow}
\,{\mathcal S}\, \hookrightarrow\, H^0(X,\, \text{At}_\rho(E_H))\, .
$$
Since $\widehat{\mu}$ is a homomorphism of Lie algebras, it follows that
$\widehat{h}$ is a holomorphic $G$--connection on $E_H$ such that the curvature vanishes 
identically. Now the theorem follows from Lemma \ref{lem5}.
\end{proof}

\begin{corollary}\label{cor1}
Let $E_H$ be a holomorphic principal $H$--bundle on $X$ such that the
pulled back principal $H$--bundle $\rho^*_z E_H$ is holomorphically
isomorphic to $E_H$ for every $z\, \in\, G$, where $\rho_z$ is defined in
\eqref{rz}. Then $E_H$ admits an equivariance structure.
\end{corollary}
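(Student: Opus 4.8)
The plan is to reduce the corollary to Theorem \ref{thm1} by way of Lemma \ref{lem1}, so that the only substantive thing to prove is the vanishing of the infinitesimal deformation map $\mu$ of \eqref{mu}. By Theorem \ref{thm1}, once $E_H$ is known to carry a holomorphic $G$--connection it automatically admits an equivariance structure; and by Lemma \ref{lem1} it carries such a connection precisely when $\mu\,=\,0$. Hence the whole content of the corollary is to show that the hypothesis forces $\mu\,=\,0$.

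To obtain $\mu\,=\,0$, I would first observe that the hypothesis is exactly the assertion $G_1\,=\,G$, where $G_1\,\subset\, G$ is the subset in \eqref{g1}. Indeed, saying $\rho_z^*E_H\,\cong\, E_H$ for every $z$ means that each $z\,\in\, G$ admits a holomorphic automorphism $y$ of $E_H$ lying over $\rho_z$, i.e. $(y\, ,z)\,\in\,\mathcal G$. Consequently the map $\beta$ in \eqref{e10} is surjective, and \eqref{e10} is a short exact sequence of complex Lie groups $0\,\longrightarrow\,\text{Aut}(E_H)\,\stackrel{\alpha}{\longrightarrow}\,{\mathcal G}\,\stackrel{\beta}{\longrightarrow}\, G\,\longrightarrow\, 0$. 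Passing to Lie algebras yields a short exact sequence of Lie algebras, so the differential $d\beta\, :\,\widetilde{\mathfrak g}\,\longrightarrow\,{\mathfrak g}$ is surjective.

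The key step is to match this surjectivity with that of $\mu_1$ in \eqref{mu2}. Under the identification of Proposition \ref{prop1}, an element $v\,\in\,\widetilde{\mathfrak g}$ corresponds to $(h'(v)\, , d\beta(v))\,\in\, H^0(X,\,\text{At}_\rho(E_H))$ as in \eqref{h1}, and applying the projection $q$ of \eqref{e9}, which induces $\mu_1$, simply returns the second coordinate $d\beta(v)$. Thus $\mu_1$ agrees with $d\beta$ under this identification, so $\mu_1$ is surjective. By exactness of \eqref{mu2}, the image of $\mu_1$ is the kernel of $\mu_2$, so surjectivity of $\mu_1$ forces $\mu_2\,=\,0$; and since the proof of Lemma \ref{lem1} shows $\mu_2\,=\,\mu$, we conclude $\mu\,=\,0$.

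With $\mu\,=\,0$ in hand, Lemma \ref{lem1} produces a holomorphic $G$--connection on $E_H$, and Theorem \ref{thm1} then yields the desired equivariance structure. The step I would be most careful about is the identification $\mu_1\,=\,d\beta$: a naive argument that ``all fibers isomorphic implies the infinitesimal deformation vanishes'' is not valid in general, precisely because of automorphisms of $E_H$, and it is Proposition \ref{prop1} that correctly encodes the surjectivity of $\beta$ (equivalently $G_1\,=\,G$) as the surjectivity of $\mu_1$, thereby circumventing this difficulty.
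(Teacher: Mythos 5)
Your proof is correct, and its skeleton is the same as the paper's: reduce the corollary to the vanishing of $\mu$, then invoke Lemma \ref{lem1} to get a holomorphic $G$--connection and Theorem \ref{thm1} to get the equivariance structure. The difference is in the one substantive step. The paper's proof simply \emph{asserts} that the hypothesis forces $\mu\,=\,0$, implicitly using the deformation--theoretic principle that the family $\rho^*E_H$, all of whose members are isomorphic to $E_H$, has vanishing infinitesimal deformation map (a claim which, as you rightly caution, is not a formality in general; here it can be defended because the family is translation--compatible over the group $G$, so vanishing of the Kodaira--Spencer map at one point of $G$ propagates everywhere, but the paper gives no argument). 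You instead supply a genuine justification via the group $\mathcal G$: the hypothesis says exactly $G_1\,=\,G$, so $\beta$ in \eqref{e10} is surjective; exactness of \eqref{e10} in the category of complex Lie groups makes $d\beta\,:\,\widetilde{\mathfrak g}\,\longrightarrow\,{\mathfrak g}$ surjective; and under the identification $h_1$ of Proposition \ref{prop1} one has $\mu_1\circ h_1\,=\,d\beta$, since $q$ in \eqref{e9} returns the second coordinate in \eqref{h1} -- hence $\mu_1$ is surjective and $\mu\,=\,\mu_2\,=\,0$ by exactness of \eqref{mu2}. This is a legitimate and arguably more honest route, with two caveats worth recording: (i) the surjectivity of $d\beta$ rests on $G_1\,=\,G$ carrying its standard Lie group structure and on $\beta$ being a submersion, which is precisely what the paper's asserted (but not constructed) complex Lie group structure on $\mathcal G$ in \eqref{e10} provides, so you inherit the same unproved foundation the paper uses elsewhere (e.g.\ in Lemma \ref{lem4}); (ii) your ending can be streamlined, since once $\mu_1$ is known to be surjective, the first half of the proof of Lemma \ref{lem1} already manufactures the connection from a linear section of $\mu_1$, with no need to pass through $\mu_2\,=\,0$ and back.
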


\begin{proof}
Since $\rho^*_z E_H$ is holomorphically
isomorphic to $E_H$ for all $z\, \in\, G$, it follows that the infinitesimal
deformation map $\mu$ in \eqref{mu} is the zero homomorphism. Therefore,
$E_H$ admits a holomorphic $G$--connection by Lemma \ref{lem1}. Now Theorem \ref{thm1}
completes the proof.
\end{proof}

\section{Some examples}

\subsection{Trivial action}

Consider the trivial action of $G$ on $X$. So $\rho(g\, ,x)\,=\, x$ for all
$g\,\in\, G$ and $x\, \in\, X$. In this case 
$$
\text{At}_\rho (E_H)\,=\, \text{ad}(E_H)\oplus (X\times{\mathfrak g})\, .
$$
Therefore, a holomorphic $G$--connection on $E_H$ is a holomorphic homomorphism
$X\times{\mathfrak g}\,\longrightarrow\, \text{ad}(E_H)$. Note that there is a
tautological holomorphic $G$--connection on $E_H$ given by the zero homomorphism from
$X\times{\mathfrak g}$ to $\text{ad}(E_H)$.

\subsection{Trivial tangent bundle}

Let $X$ be a compact complex manifold such that the holomorphic tangent bundle
$TX$ is holomorphically trivial. Then a theorem of Wang says that there is a
complex connected Lie group $G$ and a discrete subgroup $\Gamma\,\subset\, G$,
such that $X$ is biholomorphic to $G/\Gamma$ \cite[p.~774, Theorem~1]{Wa}. Fix
an isomorphism of $X$ with $G/\Gamma$. Let $\rho$ be the
left translation action of $G$ on $G/\Gamma\,=\, X$. In this case, the homomorphism
$d'\rho$ in \eqref{e7} is an isomorphism. Therefore, the Atiyah exact sequence
in \eqref{e5} coincides with the exact sequence in \eqref{e9}.

Consequently, holomorphic $G$--connections on $E_H$ are same as holomorphic
connections on $E_H$.

\subsection{Smooth toric variety}

Let $X$ be a smooth complex projective toric manifold such that the subvariety $D\, \subset\, X$ 
where the torus action is not free is actually a simple normal crossing divisor. Then the 
logarithmic tangent bundle $TX(-\log D)$ is holomorphically trivial \cite[p.~317, 
Lemma~3.1(2)]{BDP}. Therefore, the holomorphic $G$--connections on $E_H$ are the logarithmic connections on 
$E_H$ singular over $D$. Hence \cite[p.~319, Proposition~3.2]{BDP} follows from Lemma
\ref{lem4}. We note that \cite[p.~324, Theorem~4.2]{BDP} can also be proved modifying the
proof of Theorem \ref{thm1}.

\subsection{Homogeneous manifolds}

Let $M$ be a closed subgroup of a complex connected Lie group $G$. The group $G$ acts
on $X\,:=\, G/M$ as left--translations. The left--translation action of $G$ on itself will
be denoted by $\theta$. Consider the short exact sequence
in \eqref{e9} over $X$. Pull this exact sequence back to $G$ by the quotient map
$q\, :\, G\, \longrightarrow\, G/M$. Note that this pullback is of the form
\begin{equation}\label{5.1}
0\,\longrightarrow\, \text{ad}(q^*E_H)\,{\longrightarrow}\,\text{At}_\theta
(q^* E_H)\,{\longrightarrow}\, G\times{\mathfrak g}\, \longrightarrow\, 0\, ,
\end{equation}
where $\mathfrak g$ is the Lie algebra of $G$. Observe that \eqref{5.1}
coincides with the Atiyah exact sequence for $q^*E_H$.

Now consider the right--translation action of $M$ on $G$. Since $q^*E_H$ is
pulled back from $G/M$, it follows that $q^*E_H$ is a $M$--equivariant
principal $H$--bundle on $G$.

{}From the above observation that \eqref{5.1} is the Atiyah exact sequence for $q^*E_H$
it follows that the holomorphic $G$--connections on $E_H$ are precisely the $M$--equivariant
holomorphic connections on the $M$--equivariant principal $H$--bundle on $q^*E_H$.

%%%%%%%%%%%%%%%%%%%%%%%%%%%%%%%%%%%%%%%%%%%%%%%%%%%%%%%%%%%%%

\end{document}